\definecolor{darkorchid}{rgb}{0.6, 0.2, 0.8}
\theoremstyle{plain}
\newtheorem{thm}{Theorem}[section]
\newtheorem{cor}[thm]{Corollary}
\newtheorem{lem}[thm]{Lemma}
\newtheorem{prop}[thm]{Proposition}
\theoremstyle{definition}
\newtheorem{defn}[thm]{Definition}
\theoremstyle{remark}
\newtheorem{rem}[thm]{Remark}
\theoremstyle{plain}
\numberwithin{equation}{section}
\newcommand{\N}{{\mathbb N}}
\newcommand{\Z}{{\mathbb Z}}
\def\udot#1{\ifmmode\oalign{$#1$\crcr\hidewidth.\hidewidth
    }\else\oalign{#1\crcr\hidewidth.\hidewidth}\fi}
\def\Z{\mathbb{Z}}
\def\l{\ell}
\newcommand\MOD[1]{\ \left(\normalfont\text{mod } {#1}\right)} 
\newcommand\cs[1]{\langle{#1}\rangle} 
\DeclareMathOperator*{\esssup}{ess \ sup}
\DeclareMathOperator*{\essinf}{ess \ inf}
\title{The structure of weight and function classes with coprime bases}
\author{Theresa C. Anderson, Chiara Travesset, Joey Veltri}
\date{April 2022}
\begin{document}

\begin{abstract}
    In a recent work of Anderson and Hu, the authors constructed a measure that was $p$-adic and $q$-adic doubling, for any primes $p$ and $q$, yet not doubling.  This work relied heavily on a developed number theory framework.  Here we develop this framework further, which yields a measure that is $m$-adic and $n$-adic doubling for any coprime $m,n$, yet not doubling.  Additionally, we show several new applications to the intersection of weight and function classes.
\end{abstract}

\maketitle
\section{Introduction}
Measures and weight and function classes are frequent tools used in harmonic analysis and its applications (such as in \cite{DCU}, \cite{P}, \cite{P2}, to name a few).  The study of intersection properties of different classes of these objects is delicate, but results greatly increase our understanding of how these classes interact, uncovering their underlying structure (see \cite{LN}, \cite{LPW}, \cite{PW}).  For the case of measures, Boylan, Mills, and Ward recently constructed a measure that was both \emph{dyadic doubling} and \emph{triadic doubling} yet not doubling \cite{BMW}.  The doubling property and its $n$-adic analogue are important features of measures, and classically it has been known that dyadic doubling does not imply doubling.  However, extending this result has been difficult, and to the best of our knowledge, \cite{BMW} was the first major progress to extending this classical result to intersections.  Anderson and Hu were able to extend \cite{BMW} to any pairs of primes, by approaching both the underlying number theory and analysis from a different angle.  For example, \cite{BMW} relied on very specific facts about the primes $2$ and $3$ that do not generalize.  The difficulties overcome in \cite{AH} are outlined there, along with the difficulties that would need to be overcome to further extend those results from pairs of primes to coprime bases.  In particular, \cite{AH} says, ``While the construction of the measure and the analysis employed in Sections 4–7 could carry through in this [coprime] setting, we would still crucially rely on the underlying number theory connected to the geometry of this setting, where it appears that several new ideas would be needed."  In this paper, we supply the new ideas needed on the number theory side, which immediately gives the following extension of the main result of \cite{AH}.
\begin{thm}
\label{main result}
There exists an infinite family of measures that are both $m$-adic and $n$-adic doubling for any $(m,n)=1$ but not doubling.
\end{thm}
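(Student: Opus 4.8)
The plan is to follow the general strategy of \cite{AH}, realizing the desired measures as weak-$*$ limits of absolutely continuous perturbations and then reducing every analytic claim to the new coprime number theory developed above. Concretely, I would work on the torus $\TT = \R/\Z$ and build each measure in the family as $\mu = \lim_{N\to\infty} w_N\,dx$, where
\[
w_N = \prod_{k=1}^{N}\bigl(1 + \phi_k\bigr),
\]
and each $\phi_k$ is a bounded, mean-zero perturbation supported at a carefully chosen scale $s_k \to 0$ and location $x_k$. The freedom in the sequences $(x_k)$, $(s_k)$, and the magnitudes $a_k = \norm{\phi_k}_\infty \in (0,1)$ is what produces an \emph{infinite} family: distinct admissible sequences yield distinct (generically mutually singular) measures, so it suffices to exhibit one admissible regime. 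Weak-$*$ convergence to a genuine probability measure holds whenever the scales $s_k$ are sufficiently separated, as for classical Riesz products, so the entire content lies in choosing the $\phi_k$ so that the limit is simultaneously $m$-adic and $n$-adic doubling yet not doubling.

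For the two doubling conditions I would argue at the level of the defining filtrations. A weight of the above product form is $n$-adic doubling precisely when, for every $n$-adic interval $I$ and its parent $\hat I$, the average of $\prod_k(1+\phi_k)$ over $I$ stays within a fixed constant of its average over $\hat I$; since averaging is multiplicative across the scales on which the $\phi_k$ live, it is enough to bound, uniformly in $k$, the increment each factor contributes to an $n$-adic parent/child comparison (and, separately, to an $m$-adic one). If $\phi_k$ were adapted to a single grid this would be automatic for that grid, but the crux is that the \emph{same} $\phi_k$ must have controlled increments for \emph{both} grids at once. This is exactly where the coprime framework enters: the relevant increments are governed by the overlap numbers $\abs{I\cap J}$ for $I$ an $m$-adic and $J$ an $n$-adic interval of the generations dictated by $s_k$, and the lemmas established above (quantifying how the two grids interleave when $(m,n)=1$) give the uniform bound on every such increment. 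Feeding these into the telescoping product yields fixed doubling constants $C_m$ and $C_n$ independent of $N$, hence for $\mu$.

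The failure of ordinary doubling is then produced cumulatively across scales rather than by any single factor. Each $\phi_k$ contributes only a bounded increment to every adic parent/child comparison, so along any branch of either adic tree the factors telescope with the fixed constants $C_m, C_n$; but I would align the \emph{signs} of the perturbations along a nested sequence of \emph{non-adic} intervals $I_k$, concentric with doubled copies $2I_k$, so that the $\phi_j$ for $j \le k$ all push mass to one side of $I_k$. Because the $I_k$ are intervals of neither grid — and here coprimality is essential, as it supplies ordinary intervals straddling points $x_k$ that are deep for both the $m$-adic and the $n$-adic filtrations — the one-sided accumulation is invisible to both adic doubling conditions, while $\mu(2I_k)/\mu(I_k)$ grows like a divergent product once $\sum a_k = \infty$ along that branch. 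This forces $\sup_I \mu(2I)/\mu(I) = \infty$, the desired non-doubling.

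The main obstacle, and the reason the prime-case argument of \cite{AH} does not transfer verbatim, is the simultaneous control of both adic increments for general coprime $m,n$. When $m$ or $n$ is composite its adic tree carries extra internal divisor structure, and the overlap pattern of the two grids is correspondingly more intricate than in the two-prime setting of \cite{BMW}, \cite{AH}; the worst-case overlap ratios that drive the doubling constants are no longer controlled by the elementary facts available there. Establishing uniform bounds on these ratios — equivalently, ruling out an accumulation of near-coincidences between the $m$-adic and $n$-adic boundaries at a common scale, and guaranteeing a robust supply of jointly deep points $x_k$ — is precisely the new number-theoretic input. Once it is in hand, the analytic scheme above runs in parallel with \cite{AH} and delivers the theorem.
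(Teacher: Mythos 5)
Your proposal replaces the paper's actual route with a Riesz-product scheme and, in doing so, leaves the two essential points unproved. First, the entire content of the paper's proof is the number theory that you invoke only as ``the lemmas established above'': the theorem is obtained by feeding three specific propositions into the measure construction of \cite{AH}. Those propositions are (i) a stability result for the multiplicative order of $n^{\phi(m)}$ in $(\Z/m^t\Z)^*$, expressed through a function $\psi(m,n)$ independent of $t$; (ii) the congruence $n^{\phi(m)}\equiv 1 \MOD{\psi(m,n)}$; and (iii) the existence, for every $k\equiv 1\MOD{\psi(m,n)}$ in the relevant range, of infinitely many pairs $(t_2,j)$ with $j\equiv -1\MOD{n}$ solving
\[
\frac{k}{m^{t_1\phi(n)}}-\frac{j}{n^{t_2\phi(m)}}=\frac{1}{m^{t_1\phi(n)}n^{t_2\phi(m)}}.
\]
This diophantine statement is what places the distinguished points of the two grids within $\epsilon$ of each other and hence controls every overlap $|I\cap J|$ you appeal to. Your proposal never states, let alone proves, anything of this kind, so the uniform bounds on the adic increments --- the heart of the matter --- are assumed rather than established.

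Second, your analytic mechanism for non-doubling is in tension with your mechanism for adic doubling. You propose to accumulate one-sided perturbations along a nested sequence of intervals $I_k$ around a point that is deep for both filtrations, so that $\mu(2I_k)/\mu(I_k)$ diverges like a product with $\sum a_k=\infty$. But that same point lies in a nested chain of $m$-adic (and $n$-adic) intervals, and a one-sided push of mass at every scale near it will degrade the $m$-adic parent/child ratios along that chain unless each $\phi_k$ is exactly balanced on the adic intervals of both grids at its scale --- which is precisely what two incommensurable grids forbid except up to errors that your scheme allows to accumulate. The construction the paper relies on avoids this: it uses disjointly supported, single-scale redistributions (the modules $I^{\alpha}$, with $w=a^{\alpha}$ on one half and $w=b^{\alpha}$ on the other), positioned via (iii) so that neither grid straddles the jump badly, and non-doubling comes from the within-module ratio $b^{\alpha}/a^{\alpha}\to\infty$ across modules rather than from a divergent product at a single point. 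As written, your argument has a genuine gap at both the number-theoretic and the analytic stage.
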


While the basic outline of the number theory is similar, relying on stability of certain orders in groups, the underlying structures the argument relies on are different.  For example, here we need to introduce a function $\psi(m,n)$ of two variables which governs the choice of certain progressions crucial to the construction of the measure $\mu$; this reduces to simply a power of a prime $p$ in \cite{AH}.  Additionally, several components of the proofs of this order stability and existence of progressions were much simpler in \cite{AH}.  Our new number theory approach is compared with \cite{AH} for easy reference, and we refer the reader to this source for more information on how this number theory directly improves the analysis, yielding Theorem \ref{main result}. 

We also discuss several applications, some of which are immediate given the new number theory and the framework of \cite{AH} and some of which are completely new.  For the immediate applications, we are able to show that the intersection of any $m$-adic \emph{reverse H\"older} weight class with any $n$-adic one, where $m$ and $n$ are coprime, is never the full reverse H\"older class (outside of the case of the $\infty$-reverse H\"older class, $RH_\infty$).  We have similar results for the Muckenhoupt $A_p$ weights.  For the extremal classes $RH_\infty$ and $A_1$ (see Section 3 for definitions and context), we further show that the weight used to prove the previous non-equality of the weight classes is not an $A_1$ nor an $RH_\infty$ weight, even $n$-adically.

Our most extensive application is the new addition of results for the $BMO$ class of functions of \emph{bounded mean oscillation}.  These are functions that do not deviate much from their averages, on average.  Formally, a function $f$ is in $BMO$ if and only if
\begin{equation}
    \|f\|_{BMO} := \sup_{I}\fint_I\left|f-\fint_If\right| < \infty ,
\end{equation}
where $I$ is any interval and $\fint_I$ gives the average (with respect to Lebesgue measure).  The class $BMO_n$ is defined similarly, except that the allowable intervals $I$ are restricted to be $n$-adic. Results about this class have inspired several folkloric questions about measures, answered in part by \cite{BMW} and \cite{AH}.  We found a reference in Krantz \cite{Krantz} to unpublished work of Peter Jones on intersections of $BMO_p$ classes, for $p$ prime; however, we have been unable to locate this work.  Krantz's work presents Jones's result for the specific case of the primes $2$ and $3$, specifically that
\begin{prop}
\label{main prop}
We have 
\[
BMO_2\cap BMO_3 \neq BMO.
\]
\end{prop}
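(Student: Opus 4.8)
The plan is to produce a single function $f$ lying in $BMO_2\cap BMO_3$ yet not in $BMO$, built as a lacunary superposition
\[
f=\sum_{k\ge1}\frac1k\,g_k ,
\]
where each $g_k$ is a bounded mean-zero block living at scale $6^{-k}=2^{-k}3^{-k}$. The reason to work at the product scale is that the uniform grid of mesh $6^{-k}$ simultaneously refines the dyadic grid of mesh $2^{-k}$ and the triadic grid of mesh $3^{-k}$, so a single block can be made visible to both grids at once. The heuristic driving the whole construction is the classical dichotomy that a lacunary sum is controlled in $BMO$ by the $\ell^2$ norm of its coefficients, while its oscillation across a coherently chosen nested family of intervals is governed by the $\ell^1$ norm. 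Since $\sum_k k^{-2}<\infty$ but $\sum_k k^{-1}=\infty$, the coefficients $c_k=1/k$ sit exactly in the gap we want to exploit: finite on the grids, divergent on a well-chosen family of general intervals.

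I would design each $g_k$ to be constant on the cells of the mesh-$6^{-k}$ grid, choosing its values so that its average \emph{vanishes} on every dyadic interval of length $\ge 2^{-k}$ and on every triadic interval of length $\ge 3^{-k}$. These are finitely many linear constraints at each scale, and they are simultaneously solvable precisely because $\gcd(2,3)=1$, which makes the dyadic and triadic systems of averaging constraints linearly independent — this is the concrete, by-hand instance of the paper's number-theoretic input. On top of this, I would reserve a connected block of subcells, \emph{positioned to straddle both a dyadic break and a triadic break} (hence at a genuinely non-grid location), on which $g_k\ge c_0$, together with an opposite region on which $g_k\le-c_0$, for some fixed $c_0>0$.

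With the blocks in hand, the two $n$-adic bounds and the failure of $BMO$ would be verified as follows. Fix a dyadic interval $J$ and split $f$ into scales coarser than $\abs{J}$ (nearly constant on $J$, hence negligible after subtracting $\fint_J f$), scales comparable to $\abs{J}$ (boundedly many, contributing $O(1)$), and scales finer than $\abs{J}$ (each with vanishing $J$-average by construction). Lacunarity makes the fine scales essentially orthogonal, so their combined oscillation is $\lesssim\big(\sum_k k^{-2}\big)^{1/2}<\infty$, giving $f\in BMO_2$; the identical argument against the triadic grid gives $f\in BMO_3$. For the failure of $BMO$, I would select a nested family of general intervals $I_1\supset I_2\supset\cdots$, where $I_K$ is chosen inside the positive region of $g_K$, which sits inside the positive region of $g_{K-1}$, and so on (a Cantor/Riesz-product selection), and inside $I_K$ pick two subintervals of comparable length on which all of $g_1,\dots,g_K$ are biased with opposite signs. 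This forces
\[
\fint_{I_K}\left|f-\fint_{I_K}f\right|\ \gtrsim\ \sum_{k=1}^K\frac1k\ \approx\ \log K\ \xrightarrow[K\to\infty]{}\ \infty ,
\]
so $\norm{f}_{BMO}=\infty$.

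The main obstacle is the simultaneous grid-transparency of each block: arranging $g_k$ to average to zero against the dyadic grid \emph{and} the triadic grid at once, while still leaving a coherent oscillating region at a location aligned with neither grid, and then maintaining this across all scales so that the reserved positive regions genuinely nest. This is exactly where coprimality of $2$ and $3$ is essential — without it the two families of averaging constraints would overlap and could annihilate the very oscillation we are trying to preserve. For general coprime $m,n$ this is the step that the function $\psi(m,n)$ and the order-stability results developed in this paper are designed to control, which is why the special case $(2,3)$ can be carried out by hand while the general statement requires the new number theory.
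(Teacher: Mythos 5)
There is a genuine gap, and it sits exactly where you flag ``the main obstacle'': as described, the construction cannot achieve the two upper bounds and the lower bound simultaneously. To get $\fint_{I_K}\bigl|f-\fint_{I_K}f\bigr|\gtrsim\sum_{k\le K}1/k$ you need, inside $I_K$, two subsets of measure comparable to $|I_K|$ on which \emph{all} of $g_1,\dots,g_K$ are biased with opposite signs. Since the positive and negative blocks of $g_1$ are distinct cells of the mesh-$6^{-1}$ grid, those two subsets must lie on opposite sides of a single cell boundary $x_1$ of that grid, and letting $K\to\infty$ forces every $g_k$ to be $\ge c_0$ on one side of $x_1$ and $\le -c_0$ on the other; hence $f$ has an unbounded one-point jump at $x_1$. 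But $x_1$ is of the form $i\cdot 2^{-k}3^{-k}$, and such a point is a dyadic break point only if $3^k\mid i$ and a triadic one only if $2^k\mid i$, so it can never be both (that would force $x_1\in\Z$). Whichever grid fails to break at $x_1$ has intervals straddling $x_1$ at every scale, and on those the mean oscillation of $f$ grows like $\log(1/|J|)$, so at least one of $f\in BMO_2$, $f\in BMO_3$ must fail. Relatedly, your mean-zero conditions do not deliver the upper bounds: mean zero of $g_k$ on dyadic intervals of length $\ge 2^{-k}$ says nothing about shorter dyadic intervals, and since the $6^{-k}$ grid is not nested in the dyadic grid, short dyadic intervals do straddle the jumps of the coarse $g_k$'s, so ``nearly constant on $J$'' is false precisely near the accumulation point you need for divergence. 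The only escape is to separate the singularities: let the positive region accumulate at a dyadic break point $d$ from one side and the negative region at a triadic break point $\tau$ from the other, with $|d-\tau|$ below the block scale at every stage, so each grid is blind to the jump. The existence of such pairs at every scale is a Diophantine statement ($k\,3^{t}-j\,2^{s}=1$ with prescribed residues), which is the content of Proposition \ref{prop3} and not a linear-algebra consequence of $\gcd(2,3)=1$; deferring it to a closing remark leaves the proof incomplete.

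For contrast, the paper sidesteps the block construction entirely: it takes $f=\log w_\mu$ for the weight $w_\mu$ built in \cite{AH} (whose construction already encodes the Diophantine input above), obtains $f\in BMO_2\cap BMO_3$ from $w_\mu\in A_\infty^2\cap A_\infty^3$ together with the grid-wise version of the $\log A_\infty\subset BMO$ theorem, and defeats $BMO$ by a direct computation of the mean oscillation on the intervals $H^{(\alpha)}\sqcup G^{(\alpha)}$, where it equals $\tfrac{\alpha}{4}\log(b/a)\to\infty$.
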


To show this, Krantz explicitly constructs a sequence of functions $f_k$.  However, the details are sparse, so we decided to take a completely different approach involving constructing a function $f$ derived from the weight $w$ constructed in \cite{AH}.  Using this approach, we were able to extend Proposition \ref{main prop} to coprime bases.  Since this application is new, we first show the result for prime bases and then extend to the coprime case.  Our result is the following:
\begin{thm}
\label{BMO thm}
Choose any $(m,n)=1$.  We have 
\[
BMO_m\cap BMO_n \neq BMO.
\]
\end{thm}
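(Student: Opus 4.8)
The plan is to build the witnessing function directly from the weight $w$ of \cite{AH} (in the coprime case, from the weight underlying the measure of Theorem~\ref{main result}) by setting $f = \log w$. The guiding principle is the classical correspondence between $A_\infty$ weights and $BMO$: on any fixed grid, an adic $A_\infty$ weight has logarithm in the corresponding adic $BMO$, while a genuine failure of the global structure should push $\log w$ out of the unrestricted class $BMO$. Thus the three things to verify are $f \in BMO_m$, $f \in BMO_n$, and $f \notin BMO$, and the coprimality of $m,n$ enters only through the \emph{existence} of a weight $w$ that is simultaneously $m$-adic and $n$-adic $A_\infty$ yet not doubling.

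For the two positive statements I would first record that the constructed weight $w$ is $m$-adic $A_\infty$ and, by symmetry, $n$-adic $A_\infty$. From the $m$-adic $A_\infty$ condition one obtains an $m$-adic reverse H\"older inequality, and the standard John--Nirenberg argument then bounds the $m$-adic mean oscillation of $\log w$ by a constant depending only on the $A_\infty$ constant of $w$; this gives $\|f\|_{BMO_m} < \infty$, i.e. $f \in BMO_m$. Repeating the estimate on the $n$-adic grid yields $f \in BMO_n$. The content here is not the analysis, which is classical, but the input that $w$ is $A_\infty$ \emph{separately} on each grid, a property guaranteed by the construction and, in the coprime case, by the new number-theoretic framework governing the admissible progressions (the function $\psi(m,n)$ and the order-stability statements).

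The main obstacle is the negative statement $f \notin BMO$, because failure of doubling, or even failure of the full $A_\infty$ condition, does not by itself imply $\log w \notin BMO$. Here I would abandon soft implications and argue directly from the explicit structure of $w$. The construction yields a sequence of \emph{generic} (non-adic) intervals $I_k$ witnessing the failure of doubling, on which $w$ is essentially constant equal to a large value $M_k$ on a definite proportion of $I_k$ while taking a comparably small value on another definite proportion, with $\log M_k \to \infty$. On such $I_k$ the quantity $\fint_{I_k}\bigl|\log w - \fint_{I_k}\log w\bigr|$ is bounded below by a fixed multiple of $\log M_k$, so that $\sup_I \fint_I|\log w - \fint_I \log w| = \infty$. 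Carrying this out requires tracking the precise placement and $w$-values on the bad intervals, which is exactly where the geometry forced by the coprime number theory is needed.

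Finally, following the stated prime-first strategy, I would run the above for $m,n$ prime using the weight of \cite{AH} verbatim, and then extend to arbitrary coprime $m,n$ by replacing that weight with the one underlying Theorem~\ref{main result}. Since the analytic steps only use the three structural facts ($m$-adic $A_\infty$, $n$-adic $A_\infty$, and the explicit non-doubling intervals), the coprime case follows as soon as those facts are supplied by the new framework, with no further change to the $BMO$ argument itself.
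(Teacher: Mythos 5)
Your proposal matches the paper's proof in all essentials: the paper also takes $f=\log w_\mu$, gets $f\in BMO_m\cap BMO_n$ from the adic $A_\infty$ property of $w_\mu$ (citing Corollary 2.19 of Chapter IV of \cite{GCRF}, i.e.\ exactly the reverse H\"older/John--Nirenberg route you describe), and disproves $f\in BMO$ by a direct lower bound of the form $\tfrac{\alpha}{4}\log\tfrac{b}{a}\to\infty$ for the mean oscillation over the non-adic intervals $H^{(\alpha)}\sqcup G^{(\alpha)}$ on which $w_\mu$ takes the two extreme values $(a/q)^\alpha$ and $(b/q)^\alpha$ on equal halves. The prime-first, then coprime-via-the-new-number-theory structure is likewise identical, so this is essentially the same argument.
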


Throughout this paper, we heavily refer to constructions and background in \cite{AH} and include comparisons and contrasts.  Section 2 contains the heavy number theory lifting, and may be of independent interest.  Section 3 contains all the applications, both the immediate (given the extensive framework in \cite{AH}) and the new.  Many interesting open questions remain for future investigation.

\subsection{Acknowledgements} The first author was supported by NSF DMS 1954407.  The authors would like to thank Nikos Villareal Styles for helpful discussions during this project.

\section{Coprime bases, number theory and main result}

Our first proposition is a substitute for Proposition 2.1 in \cite{AH}.  Here the role of the prime $p$ is played by $m$ and the prime $q$ is played by $n$, where $(m,n)=1$.  While this gives a stability result for certain orders, the structure that this stability takes is captured by a function $\psi(m,n)$, whose definition appears at the end of the proof.  The important thing is that this stability is independent of the parameter $t$ defined below (which was called $m$ in \cite{AH}, not to be confused with the different use of $m$ here).

Before stating the proposition, we first give a lemma that held trivially in \cite{AH} when $m$ was prime but requires further justification now that $m$ and $n$ are merely coprime.

\begin{lem}
\label{lem}
    If $(m, n) = 1$, then $m \mid \binom{m}{2}\left(\frac{n^{\phi(m)} - 1}{m}\right)^2$.
\begin{proof}
    Since we have $\binom{m}{2}\left(\frac{n^{\phi(m)} - 1}{m}\right)^2 = \frac{(m - 1)(n^{\phi(m)} - 1)^2}{2m}$, we would like to show that $2m^2 \mid (m - 1)(n^{\phi(m)} - 1)^2$. If $m$ is odd, then $m - 1$ is even, so the result follows since $m \mid n^{\phi(m)} - 1$. Suppose, then, that $m$ is even so that $n$ must be odd. We will prove that $n^{\phi(m)} \equiv 1 \MOD{2m}$, from which the result follows. \\
    \\
    Consider any odd $r$.
    Since $n^{\phi(r)} \equiv 1 \MOD r$, there is some integer $j$ such that $n^{\phi(r)} = jr + 1$. If $j$ were odd, then $n^{\phi(r)}$ would be even, a contradiction since $n$ is odd. Hence $j$ must be even, so $n^{\phi(r)} \equiv 1 \MOD{2r}$. \\
    \\
    Now suppose that $n^{\phi(2^\l r)} \equiv 1 \MOD{2^{\l + 1}r}$ for some nonnegative $\l$, which we have just shown holds for $\l = 0$. Then writing $n^{\phi(2^\l r)} = j2^{\l + 1}r + 1$ for some $j$,
    \begin{align*}
        n^{\phi(2^{\l + 1}r)} &= n^{2^\l\phi(r)} = n^{2\phi(2^\l r)} = \left(n^{\phi(2^\l r)}\right)^2 = (j2^{\l + 1}r + 1)^2 \\
        &= j^22^{2\l + 2}r^2 + j2^{\l + 2}r + 1 \equiv 1 \MOD{2^{\l + 2}r}.
    \end{align*}
    Thus, the claim holds for all nonnegative $\l$, which means that $n^{\phi(m)} \equiv 1 \MOD{2m}$ since $m = 2^\l r$ for some $\l \in \N$ and odd $r$.
\end{proof}
\end{lem}

Now we proceed to generalize the propositions from \cite{AH}.

\begin{prop}
\label{prop1}
Let $m, n \geq 2$ be coprime with $m > n$. Further, let $O_t(m, n)$ denote the order of $n^{\phi(m)}$ in $(\Z / m^t\Z)^*$. Then there is some $\psi(m, n)$ such that for all sufficiently large $t$, we have $$\psi(m, n) = \frac{m^t}{O_t(m, n)}.$$
\end{prop}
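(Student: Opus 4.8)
The plan is to reduce the computation to a prime-by-prime analysis via the Chinese Remainder Theorem and then apply the lifting-the-exponent lemma. First I would factor $m = \prod_{p \mid m} p^{e_p}$, so that $(\Z/m^t\Z)^* \cong \prod_{p \mid m} (\Z/p^{te_p}\Z)^*$. Under this isomorphism the order of any element is the least common multiple of the orders of its coordinates, so it suffices to understand the order of $a := n^{\phi(m)}$ in each local factor $(\Z/p^{te_p}\Z)^*$ separately.

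The key observation is that $a$ is a principal unit: since $(m,n)=1$, Euler's theorem gives $a = n^{\phi(m)} \equiv 1 \pmod m$, hence $a \equiv 1 \pmod{p^{e_p}}$ for every $p \mid m$. This is exactly what forces the local order to be a pure power of $p$. For odd $p$, the lifting-the-exponent lemma yields $v_p(a^d - 1) = v_p(a-1) + v_p(d)$, so $a^d \equiv 1 \pmod{p^{te_p}}$ precisely when $v_p(d) \geq te_p - v_p(a-1)$; once $t$ is large enough that $te_p > v_p(a-1)$, the order is therefore $p^{te_p - c_p}$ with $c_p := v_p(a-1) = v_p(n^{\phi(m)}-1)$, a constant independent of $t$. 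The prime $p=2$ (when $2 \mid m$) must be treated with the even-prime form of the lemma, which gives $v_2(a^d-1) = v_2(a^2-1) + v_2(d) - 1$ for even $d$; the same reasoning shows the local order equals $2^{te_2 - c_2}$ for large $t$, with $c_2 := v_2(a^2-1) - 1$ again constant. Note that $a - 1 = n^{\phi(m)} - 1 \neq 0$, so each $v_p(a-1)$ is finite and the local orders genuinely grow with $t$.

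Finally I would assemble the pieces. Because the local orders are powers of distinct primes, their least common multiple is simply their product, so for all sufficiently large $t$,
\[
O_t(m,n) = \prod_{p \mid m} p^{te_p - c_p} = \frac{\prod_{p \mid m} p^{te_p}}{\prod_{p \mid m} p^{c_p}} = \frac{m^t}{\prod_{p \mid m} p^{c_p}},
\]
using $\prod_{p \mid m} p^{te_p} = m^t$. This proves the proposition with
\[
\psi(m,n) := \prod_{p \mid m} p^{c_p},
\]
manifestly independent of $t$, where ``sufficiently large'' means any $t$ with $te_p > c_p$ for each of the finitely many primes $p \mid m$.

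The hard part will be the exceptional behavior at $p = 2$: the lifting-the-exponent lemma has a genuinely different shape there, distinguishing $a \equiv 1$ from $a \equiv 3 \pmod 4$, and one must verify that the required \emph{even} exponent $d$ is actually available once $te_2$ exceeds $v_2(a+1)$. Everything else is bookkeeping; the conceptual heart is the reduction to principal units furnished by Euler's theorem, which makes each local order a bare power of $p$ and causes the ratio $m^t / O_t(m,n)$ to collapse to a constant.
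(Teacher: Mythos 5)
Your proposal is correct, but it takes a genuinely different route from the paper's. The paper never factors $m$: it works directly with the modulus $m^t$, defines $t(m,n)$ as the smallest positive integer with $n^{\phi(m)}\not\equiv 1 \pmod{m^{t(m,n)+1}}$, and proves by induction on $\ell$ --- via an explicit binomial expansion of $\left(m^{t(m,n)+\ell}s+1\right)^m$ --- that $\left(n^{\phi(m)}\right)^{m^{\ell}}\not\equiv 1\pmod{m^{t(m,n)+\ell+1}}$; from this it writes $O_{t(m,n)+\ell+1}(m,n)=\gamma_\ell m^\ell$ with $\gamma_\ell\mid m$, shows the sequence $(\gamma_\ell)$ is eventually constant, and sets $\psi(m,n)=m^{t(m,n)+1}/\gamma$. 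You instead localize via CRT and apply lifting-the-exponent at each prime dividing $m$. What your route buys is an explicit closed form $\psi(m,n)=\prod_{p\mid m}p^{c_p}$ with $c_p=v_p\!\left(n^{\phi(m)}-1\right)$ (adjusted at $p=2$), and it makes the eventual constancy of $m^t/O_t(m,n)$ transparent (each local deficiency $c_p$ is a fixed finite number), whereas the paper needs a separate monotonicity-and-stabilization argument for $(\gamma_\ell)$ and characterizes $\psi$ only implicitly; what the paper's route buys is self-containedness, avoiding LTE and the prime factorization of $m$ entirely. Two remarks: the $p=2$ subtlety you flag as the hard part is actually vacuous here, since $m>n\geq 2$ forces $\phi(m)$ to be even, so when $2\mid m$ the element $a=n^{\phi(m)}$ is an odd square, hence $\equiv 1\pmod 8$, and you land in the easy branch of LTE with $c_2=v_2(a-1)$. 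Moreover, your prime-by-prime bookkeeping is arguably more robust than the paper's global step asserting that a divisor of $m^{\ell+1}$ not dividing $m^{\ell}$ must have the form $\gamma_\ell m^{\ell}$ with $\gamma_\ell\mid m$: for composite $m$ that inference requires the valuations $v_p\!\left(n^{\phi(m)}-1\right)$ to be suitably balanced across the primes $p\mid m$, which is exactly the information your local computation displays explicitly.
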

\begin{proof}
First let us define $t(m, n)$ to be the smallest positive integer such that $n^{\phi(m)} \not\equiv 1 \MOD{m^{t(m, n) + 1}}$. (There must be some such integer since $m^{t + 1}$ will exceed $n^{\phi(m)} > 1$ for large enough $t$.) \\
\\
Lemma: For all $\l \geq 0$, we have
$$\left(n^{\phi(m)}\right)^{m^\l} \not\equiv 1 \MOD {m^{t(m, n) + \l + 1}}.$$
Clearly this holds for $\l = 0$ by the definition of $t(m, n)$. Now suppose it holds for some $\l \geq 0$. Note that $n^{\phi(m)} \equiv 1 \MOD{m^{t(m, n)}}$ by Euler's Theorem when $t(m, n) = 1$, and this also holds by the definition of $t(m, n)$ when $t(m, n) > 1$. This gives us
\begin{align*}
    \left(n^{\phi(m)}\right)^{m^\l} \equiv 1 \MOD{m^{t(m, n) + \l}}. \tag{$*$}
\end{align*}
So for some $s$ we can write
\begin{align*}
    \left(n^{\phi(m)}\right)^{m^\l} = m^{t(m, n) + \l} \cdot s + 1.
\end{align*}
Note that we must have $m \nmid s$ because otherwise, this contradicts the inductive hypothesis. Therefore,
\begin{align*}
    \left(n^{\phi(m)}\right)^{m^{\l + 1}} &= \left(\left(n^{\phi(m)}\right)^{m^\l}\right)^m \\
    &= \left (m^{t(m,n) + \l} \cdot s + 1\right)^m \\
    &= \sum_{i = 0}^m \binom{m}{i} \left(m^{t(m, n) + \l} \cdot s\right)^i \\
    &= 1 + m\left(m^{t(m, n) + \l} \cdot s\right) + \sum_{i = 2}^m \binom{m}{i} \left(m^{t(m, n) + \l} \cdot s\right)^i \\
    &= 1 + m^{t(m, n) + \l + 1} \cdot s + \sum_{i = 2}^m \binom{m}{i} \left(m^{i(t(m, n) + \l)} \cdot s^i\right).
\end{align*}
Now note that for $i \geq 3$,
\begin{align*}
    i(t(m, n) + \l) &\geq t(m, n) + \l + 2(t(m, n) + \l) \\
    &\geq t(m, n) + \l + 2(1 + 0) \\
    &= t(m, n) + \l + 2,
\end{align*}
so $m^{t(m, n) + \l + 2}$ divides every term in the sum with $i \geq 3$. $m^{t(m, n) + \l + 2}$ likewise divides the term with $i = 2$ if $t(m, n) > 1$ or $\l > 0$. Otherwise, if $t(m, n) = 1$ and $\l = 0$, then we have $s = \frac{n^{\phi(m)} - 1}{m}$, in which case Lemma \ref{lem} ensures that $m^{t(m, n) + \l + 2}$ divides the term with $i = 2$ regardless since
\begin{align*}
    m^{t(m, n) + \l + 2} = m^2 \cdot m \mid m^2 \cdot \binom{m}{2}\left(\frac{n^{\phi(m)} - 1}{m}\right)^2 = \binom{m}{2}\left(m^{2(t(m, n) + \l)} \cdot s^2\right).
\end{align*}
Therefore, every term in the sum is divisible by $m^{t(m, n) + \l + 2}$. Furthermore, since $m \nmid s$, we also know $m^{t(m, n) + \l + 2} \nmid m^{t(m, n) + \l + 1} \cdot s$. This gives us
\begin{align*}
    (n^{\phi(m)})^{m^{\l + 1}} &= 1 + m^{t(m, n) + \l + 1} \cdot s + \sum_{i = 2}^m \binom{m}{i} \left(m^{i(t(m, n) + \l)} \cdot s^i\right) \\
    &\equiv 1 + m^{t(m, n) + \l + 1} \cdot s \\
    &\not\equiv 1 \MOD {m^{t(m, n) + \l + 2}}.
\end{align*}
So since our claim holds for $\l + 1$ if it holds for $\l$, this proves our lemma. Now by substituting $\l + 1$ for $\l$ in $(*)$, $O_{t(m, n) + \l + 1}(m, n) \mid m^{\l + 1}$, and by the lemma, $O_{t(m, n) + \l + 1}(m, n) \nmid m^\l$. Hence for any $\l \geq 0$, $O_{t(m, n) + \l + 1}(m, n) = \gamma_\l m^\l$, where $\gamma_\l \mid m$ and $\gamma_\l > 1$. Moreover, since $\left(n^{\phi(m)}\right)^{\gamma_\l m^\l} \equiv 1 \MOD{m^{t(m, n) + \l + 1}}$, we have $\left(n^{\phi(m)}\right)^{\gamma_\l m^{\l + 1}} \equiv 1 \MOD{m^{t(m, n) + \l + 2}}$, so
$$\gamma_{\l + 1}m^{\l + 1} = O_{t(m, n) + \l + 2}(m, n) \mid \gamma_\l m^{\l + 1}.$$
This means that $\gamma_{\l + 1} \mid \gamma_\l$ and hence the sequence $(\gamma_\l)$ is monotone decreasing. Also, it only takes on a finite number of values since there are only a finite number of positive divisors of $m$, so there must come a point $L$ such that for all $\l \geq L$, $\gamma_\l = \gamma_L$. Let us denote $\gamma_L$ simply by $\gamma$. Now if $t = t(m, n) + \l + 1$ so that $\l = t - t(m, n) - 1$, as long as $t \geq t(m, n) + L + 1$ so that $\l \geq L$, this gives us
$$O_t(m, n) = \gamma m^{t - t(m, n) - 1}.$$
Therefore,
$$\psi(m, n) = \frac{m^t}{O_t(m, n)} = \frac{m^t}{\gamma m^{t - t(m, n) - 1}} = \frac{m^{t(m, n) + 1}}{\gamma}$$
does not depend on $t$.
\end{proof}
\begin{rem}
Note that in \cite{AH}, because $m$ was prime, there were no factors of $m$ greater than 1 besides $m$ itself, which guaranteed that $\psi(m, n) = m^{t(m, n)}$ was a power of $m$ and that $L = 0$. Also, $N_0$ as defined in \cite{AH} was determined to be in fact 1, simplifying the proof considerably.
\end{rem}

The next proposition does not have an analogue in \cite{AH} since, as remarked after the proof, the analogous result followed immediately.
\begin{prop}
\label{prop2}
Let $m, n \geq 2$ be coprime with $m > n$. Let $\psi(m,n)$ be defined as in Proposition \ref{prop1}. Then
$$n^{\phi(m)} \equiv 1 \MOD{\psi(m, n)}.$$
\end{prop}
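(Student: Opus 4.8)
The plan is to prove $\psi(m,n)\mid n^{\phi(m)}-1$ directly, using the explicit shape $\psi(m,n)=m^{t(m,n)+1}/\gamma$ from Proposition \ref{prop1} together with the factorization of $n^{\phi(m)}-1$ already extracted in its proof. Write $T=t(m,n)$ for brevity. From the proof of Proposition \ref{prop1} we have $n^{\phi(m)}\equiv 1\MOD{m^{T}}$, so we may write
\[
n^{\phi(m)}=1+m^{T}s,\qquad m\nmid s,
\]
where $m\nmid s$ holds precisely because $n^{\phi(m)}\not\equiv 1\MOD{m^{T+1}}$. Since $\gamma\mid m$, we have $\psi(m,n)=m^{T}\cdot(m/\gamma)$ with $m/\gamma$ a positive integer, so the entire claim reduces to showing $\dfrac{m}{\gamma}\mid s$.

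The key step is to pin down $\gamma$ in terms of $s$, which I would do by recomputing the order $O_{T+1}(m,n)$ of $n^{\phi(m)}$ in $(\Z/m^{T+1}\Z)^*$ from the displayed factorization. Using the binomial expansion (and $T\geq 1$, which kills every term of degree $\geq 2$ modulo $m^{T+1}$, since $Ti\geq 2T\geq T+1$), one gets $\left(n^{\phi(m)}\right)^{c}\equiv 1+c\,m^{T}s\MOD{m^{T+1}}$ for every $c\geq 1$. Hence this is $\equiv 1$ exactly when $m\mid cs$, i.e.\ when $c$ is a multiple of $m/\gcd(m,s)$, so $O_{T+1}(m,n)=m/\gcd(m,s)$. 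Comparing with the value $O_{T+1}(m,n)=\gamma_0$ furnished by Proposition \ref{prop1} (the $\l=0$ instance of $O_{T+\l+1}(m,n)=\gamma_\l m^{\l}$) yields $\gamma_0=m/\gcd(m,s)$.

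The one subtlety — and the step to be most careful about — is that $\psi(m,n)$ is built from $\gamma=\gamma_L$, not from $\gamma_0$, so I must rule out a drop between levels $0$ and $L$. To handle this robustly I would exploit the same binomial recursion used in Proposition \ref{prop1}: defining $s^{(\l)}$ by $\left(n^{\phi(m)}\right)^{m^{\l}}=1+m^{T+\l}s^{(\l)}$, the computation there shows $\left(n^{\phi(m)}\right)^{m^{\l+1}}\equiv 1+m^{T+\l+1}s^{(\l)}\MOD{m^{T+\l+2}}$, whence $s^{(\l+1)}\equiv s^{(\l)}\MOD{m}$ and therefore $\gcd(m,s^{(\l)})=\gcd(m,s)$ for all $\l$. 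Running the order computation at a general large level $t=T+\l+1$ then gives $O_t(m,n)=(m/\gcd(m,s))\,m^{\l}$, so $\gamma_\l=m/\gcd(m,s)$ is independent of $\l$ and in particular $\gamma=\gamma_L=\gamma_0=m/\gcd(m,s)$. Consequently $m/\gamma=\gcd(m,s)\mid s$, which is exactly the reduction sought, and hence $\psi(m,n)=m^{T}(m/\gamma)\mid m^{T}s=n^{\phi(m)}-1$, i.e.\ $n^{\phi(m)}\equiv 1\MOD{\psi(m,n)}$.
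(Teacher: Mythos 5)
Your proof is correct, and it takes a genuinely different (and in some ways sharper) route than the paper's. The paper makes the same reduction to $\frac{m}{\gamma}\mid s$, but gets there in one shot: it raises $1+m^{T}s$ to the known order $O_t(m,n)=\gamma m^{t-T-1}$ for a single large $t$, expands binomially modulo $m^{t}$, checks that every term with index $i\geq 2$ vanishes, and reads off $0\equiv \gamma m^{t-1}s\MOD{m^t}$, i.e.\ $m\mid\gamma s$, hence $\frac{m}{\gamma}\mid s$. You instead determine the order exactly at each level: the computation $O_{T+1}(m,n)=m/\gcd(m,s)$, combined with the propagation $s^{(\l+1)}\equiv s^{(\l)}\MOD{m}$ extracted from the induction in Proposition \ref{prop1}, identifies $\gamma_\l=m/\gcd(m,s)$ for every $\l$. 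This costs you an auxiliary lemma the paper does not need, but it buys more than the statement asks for: it shows the sequence $(\gamma_\l)$ is constant from the start (so $L=0$ even for composite $m$) and yields the closed form $\psi(m,n)=m^{t(m,n)}\gcd(m,s)$, neither of which is visible from the paper's argument. One step is worth making explicit: your binomial computation at level $T+\l+1$ determines the order of the \emph{element} $\left(n^{\phi(m)}\right)^{m^{\l}}$, namely the least $c$ with $\left(n^{\phi(m)}\right)^{cm^{\l}}\equiv 1\MOD{m^{T+\l+1}}$; to convert this into $\gamma_\l$ you need the fact from Proposition \ref{prop1} that $O_{T+\l+1}(m,n)=\gamma_\l m^{\l}$ is a multiple of $m^{\l}$, so that the order of that element equals $O_{T+\l+1}(m,n)/m^{\l}=\gamma_\l$. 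With that observation spelled out, the argument is complete.
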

\begin{proof}
Recall that $\psi(m, n) = m^{t(m, n) + 1} / \gamma$ for some $\gamma \mid m$, where $\gamma > 1$. From Proposition \ref{prop1}, $n^{\phi(m)} \equiv 1 \MOD{m^{t(m, n)}}$, so we can write $n^{\phi(m)} = m^{t(m, n)} \cdot s + 1$. Then for any $t \geq t(m, n) + L + 1$, since $O_t(m, n) = \gamma m^{t - t(m, n) - 1}$,
\begin{align*}
    0 &\equiv \left(n^{\phi(m)}\right)^{\gamma m^{t - t(m, n) - 1}} - 1 \\
    &= \left(m^{t(m, n)} \cdot s + 1\right)^{\gamma m^{t - t(m, n) - 1}} - 1 \\
    &= \sum_{i = 0}^{\gamma m^{t - t(m, n) - 1}}\binom{\gamma m^{t - t(m, n) - 1}}{i}\left(m^{t(m, n)} \cdot s\right)^i - 1 \\
    &= \sum_{i = 1}^{\gamma m^{t - t(m, n) - 1}}\binom{\gamma m^{t - t(m, n) - 1}}{i}\left(m^{t(m, n)} \cdot s\right)^i \MOD{m^t}.
\end{align*}
Now if $2 \leq i \leq \gamma m^{t - t(m, n) - 1} - 1$, since $(i - 1)t(m, n) \geq i - 1 \geq 1$, we have
\begin{align*}
    m^t &\mid \gamma m^t \mid \gamma m^{t + (i - 1)t(m, n) - 1} = \gamma m^{t - t(m, n) - 1}m^{it(m, n)} \mid \binom{\gamma m^{t - t(m, n) - 1}}{i}m^{it(m, n)}.
\end{align*}
As long as $t$ is sufficiently large, we also have
$$m^t \mid \left(m^{t(m, n)}\right)^{\gamma m^{t - t(m, n) - 1}}.$$
Therefore, we can drop all terms with $i \geq 2$ from the sum to get
\begin{align*}
    0 &\equiv \gamma m^{t - t(m, n) - 1} \cdot m^{t(m, n)} \cdot s = \gamma m^{t - 1} \cdot s \MOD{m^t}.
\end{align*}
So we must have $m \mid \gamma s \Rightarrow \frac{m}{\gamma} \mid s$ and hence since $\psi(m, n) = m^{t(m, n) + 1} / \gamma$, we see that $n^{\phi(m)} \equiv 1 \MOD{\psi(m, n)}$.
\end{proof}
\begin{rem}
Note that in \cite{AH}, this result was a trivial consequence of the definition of $t(m, n)$, for as remarked above, $\psi(m, n) = m^{t(m, n)}$ when $m$ is prime, giving $n^{\phi(m)} \equiv 1 \MOD{\psi(m, n)}$ as in Proposition \ref{prop1}.
\end{rem}

The final proposition of this section is an analogue of Proposition 2.3 in \cite{AH}, a thorough commentary of the derivation and significance of which appears in a remark therein.  The main idea is that this proposition guarantees the existence of certain arithmetic progressions, as stated, which immediately leads to a geometric proximity of corresponding points on $m$-adic and $n$-adic intervals.  Precisely, this proposition is the main input to guarantee that certain distinguished points $\Upsilon$ and $\textZeta$ are ``within $\epsilon$", as shown in Proposition 3.5 of \cite{AH}.  The proximity of these points governs how $m$-adic and $n$-adic intervals can intersect, which underlies the analysis done in Sections 4–7 of \cite{AH}, particularly in the extensive work to show that the constructed measure is $m$-adic doubling (which was $p$-adic doubling in the language of \cite{AH}).  The new features here include the different structure of the progressions, and a simplified proof that avoids some of the group theory as compared to \cite{AH}.
\begin{prop}
\label{prop3}
Let $m, n \geq 2$ be coprime with $m > n$. Then for any sufficiently large $t_1 \in \N$ and any $k \in G_{t_1}(m, n)$, where
\begin{align*}
    G_{t_1}(m, n) &= \{1 + i\psi(m, n): i \in \{0, ..., m^{t_1\phi(n)} / \psi(m, n) - 1\}\} \\
    &= \{a \in [1, m^{t_1\phi(n)}]: a \equiv 1 \MOD{\psi(m, n)}\}\text{,}
\end{align*}
there exist infinitely many $(t_2, j) \in \N^2$, where
\begin{align*}
    j &\in \{in - 1: i \in \{1, ..., n^{t_2\phi(m) - 1}\}\} \\
    &= \{b \in [1, n^{t_2\phi(m)}]: b \equiv -1 \MOD n\}\text{,}
\end{align*}
such that
\begin{align*}
    \frac{k}{m^{t_1\phi(n)}} - \frac{j}{n^{t_2\phi(m)}} &= \frac{1}{m^{t_1\phi(n)}n^{t_2\phi(m)}}.
\end{align*}
\end{prop}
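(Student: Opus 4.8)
The plan is to clear denominators and reduce the displayed identity to a single congruence, then solve that congruence using the group-theoretic content of Propositions \ref{prop1} and \ref{prop2}. Writing $M = m^{t_1\phi(n)}$ and $N = n^{t_2\phi(m)}$, the desired equation $\tfrac{k}{M} - \tfrac{j}{N} = \tfrac{1}{MN}$ is equivalent, after multiplying through by $MN$, to the Diophantine relation $kN - jM = 1$. Hence for a given $k$ and $t_2$ an integer $j = (kN-1)/M$ exists precisely when $k\,n^{t_2\phi(m)} \equiv 1 \MOD M$, and all of the work is in producing infinitely many $t_2$ for which this holds and then checking that the resulting $j$ lands in the prescribed set.

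The key structural observation is that the cyclic subgroup $\langle n^{\phi(m)}\rangle \leq (\Z/M\Z)^*$ coincides with $H := \{a : a \equiv 1 \MOD{\psi(m,n)}\}$. First I would note that $m \mid \psi(m,n)$: since $\psi(m,n) = m^{t(m,n)+1}/\gamma$ with $\gamma \mid m$ and $t(m,n) \geq 1$, the quotient $m^{t(m,n)}/\gamma$ is an integer, so $\psi(m,n) = m\cdot(m^{t(m,n)}/\gamma)$. Consequently every residue $\equiv 1 \MOD{\psi(m,n)}$ is $\equiv 1 \MOD m$, hence coprime to $m$ and a unit mod $M$; running through the arithmetic progression $1, 1+\psi(m,n), \dots, 1+(M/\psi(m,n)-1)\psi(m,n)$ shows these residues form a subgroup $H$ with exactly $M/\psi(m,n)$ elements. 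By Proposition \ref{prop2}, $n^{\phi(m)} \in H$, so $\langle n^{\phi(m)}\rangle \subseteq H$; and by Proposition \ref{prop1} (applied with $t = t_1\phi(n)$, valid once $t_1$ is large) the order of $n^{\phi(m)}$ mod $M$ is $O_{t_1\phi(n)}(m,n) = M/\psi(m,n) = |H|$. Matching orders forces $\langle n^{\phi(m)}\rangle = H$.

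With this identification the congruence solves itself. The hypothesis $k \in G_{t_1}(m,n)$ says precisely that $k \equiv 1 \MOD{\psi(m,n)}$, so $k \in H = \langle n^{\phi(m)}\rangle$, whence $k^{-1} \in H$ as well. Thus there is an exponent with $n^{t_2\phi(m)} \equiv k^{-1} \MOD M$, i.e. $k\,n^{t_2\phi(m)} \equiv 1 \MOD M$; and since $n^{\phi(m)}$ has order $M/\psi(m,n)$ mod $M$, replacing $t_2$ by $t_2 + r\,M/\psi(m,n)$ for $r \geq 0$ yields infinitely many such $t_2$. For each, set $j = (kN-1)/M$, an integer by construction; taking $t_2$ large makes $N$ large so that $j \geq 1$, while $k < M$ (the largest element of $G_{t_1}(m,n)$ is $M - \psi(m,n) + 1$) gives $j < N$, so $j \in [1, N]$.

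Finally I would verify the residue condition $j \equiv -1 \MOD n$. Reducing $jM = kN - 1$ modulo $n$ and using $n \mid N$ gives $jM \equiv -1 \MOD n$; since $M = (m^{\phi(n)})^{t_1} \equiv 1 \MOD n$ by Euler's theorem, this collapses to $j \equiv -1 \MOD n$, placing $j$ in the required set. The one genuinely nontrivial step is the group identification $\langle n^{\phi(m)}\rangle = H$, where Propositions \ref{prop1} and \ref{prop2} are combined (order count plus containment); everything else is bookkeeping with coprimality and the two Euler congruences.
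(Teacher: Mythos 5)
Your proposal is correct and follows essentially the same route as the paper: both identify $G_{t_1}(m,n)$ with the cyclic group $\langle n^{\phi(m)}\rangle$ modulo $m^{t_1\phi(n)}$ by combining the containment from Proposition \ref{prop2} with the order count from Proposition \ref{prop1}, then solve $kn^{t_2\phi(m)}\equiv 1$ for infinitely many $t_2$ and verify the range and residue conditions on $j$ by the same Euler-congruence bookkeeping. The only cosmetic differences are that you explicitly check $G_{t_1}(m,n)$ is a subgroup (the paper only needs the set containment plus equal cardinality) and that you secure $j\geq 1$ by taking $t_2$ large rather than by the paper's direct contradiction.
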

\begin{proof}
By Proposition \ref{prop2}, $n^{\phi(m)} \equiv 1 \MOD{\psi(m,n)}$, so $\cs{n^{\phi(m)}} \leq (\Z / m^{t_1\phi(n)}\Z)^*$ is a subset of $G_{t_1}(m, n)$. Moreover, by Proposition \ref{prop1}, for any $t \geq t(m, n) + L + 1$, the order of $n^{\phi(m)}$ modulo $m^t$ is $m^t / \psi(m, n)$. So if $t_1 \geq \frac{t(m, n) + L + 1}{\phi(n)}$, we will have $|n^{\phi(m)}| = |G_{t_1}(m, n)|$. Hence $G_{t_1}(m, n) = \cs{n^{\phi(m)}}$, so for any $k \in G_{t_1}(m, n)$, there is some nonnegative $t' < |G_{t_1}(m, n)|$ such that
\begin{align*}
    \left(n^{\phi(m)}\right)^{t'} \equiv k \MOD{m^{t_1\phi(n)}}.
\end{align*}
Therefore,
\begin{align*}
    kn^{t_2\phi(m)} &\equiv \left(n^{\phi(m)}\right)^{t'}\left(n^{\phi(m)}\right)^{t_2} = \left(n^{\phi(m)}\right)^{t' + t_2} \MOD{m^{t_1\phi(n)}}.
\end{align*}
Now for any $t_2 \in \{i|G_{t_1}(m, n)| - t': i \in \N\}$, we have $kn^{t_2\phi(m)} \equiv 1 \MOD{m^{t_1\phi(n)}}$. Hence for any such $t_2$, there is some $j$ such that
$$kn^{t_2\phi(m)} - jm^{t_1\phi(n)} = 1 \ \Rightarrow \ \frac{k}{m^{t_1\phi(n)}} - \frac{j}{n^{t_2\phi(m)}} = \frac{1}{m^{t_1\phi(n)}n^{t_2\phi(m)}}.$$
Clearly, $j \geq 1$ because otherwise, if $j \leq 0$, we would have
\begin{align*}
    kn^{t_2\phi(m)} - jm^{t_1\phi(n)} &\geq kn^{t_2\phi(m)} > 1. 
\end{align*}
Also, $j \leq n^{t_2\phi(m)}$ because otherwise, if $j > n^{t_2\phi(m)}$, since $k \leq m^{t_1\phi(n)}$, we would have
\begin{align*}
    1 &= kn^{t_2\phi(m)} - jm^{t_1\phi(n)} < kn^{t_2\phi(m)} - n^{t_2\phi(m)}m^{t_1\phi(n)} = n^{t_2\phi(m)}\left(k - m^{t_1\phi(n)}\right) \leq 0.
\end{align*}
Moreover, since $m^{t_1}$ and $n$ are coprime, $\left(m^{t_1}\right)^{\phi(n)} \equiv 1 \MOD n$, which means
\begin{align*}
    j &\equiv j\left(m^{t_1}\right)^{\phi(n)} \equiv jm^{t_1\phi(n)} - kn^{t_2\phi(m)} = -1 \MOD n.
\end{align*}
So $j \in \{b \in [1, n^{t_2\phi(m)}]: b \equiv -1 \MOD n\}$, which concludes the proof.
\end{proof}

As stated in the introduction, using the framework of \cite{AH}, these propositions immediately imply Theorem \ref{main result}.

\section{Applications to structure of weight and function classes}
We now show a wide variety of applications to weight and function classes.  The first few are immediate consequences of the number theory results in the previous section and the proofs of the prime case in \cite{AH}.  The last several applications are new and are discussed in more detail.

Firstly, we have applications for reverse H\"older and $A_p$ weights (which we will call $A_r$ weights, following the notation of \cite{AH}).
Define the \emph{reverse H\"older and $n$-adic reverse H\"older classes} as follows:
\begin{defn}
Let $r>1$. We say that $w\in RH_r$ if 
\begin{equation}
\label{20200829eq01}
\left(\fint_I w^r \right)^{\frac{1}{r}} \leq C \fint_I w 
\end{equation}
for all intervals $I$, where $C$ is an absolute constant. We say $w \in RH_1$ if $w \in RH_r$ for some $r>1$, that is
$$
RH_1:=\bigcup_{r>1} RH_r. 
$$
\end{defn}
\begin{defn}
Let $r>1$. We say that $w\in RH_r^n$ if
\begin{equation} \label{20200829eq02}
\left(\fint_Q w^r \right)^{\frac{1}{r}} \leq C \fint_Q w
\end{equation} 
for all $n$-adic intervals $Q$, where $C$ is an absolute constant and $w$ is $n$-adic doubling. Moreover, we say $w \in RH^n_1$ if $w \in RH_r^n$ for some $r>1$, that is
$$
RH^n_1:=\bigcup_{r>1} RH_r^n. 
$$
\end{defn}

\begin{cor} \label{20200901cor01}
For any $r>1$, \[RH_r^m \cap RH_r^n \neq RH_r.\]
In particular, 
\[RH_1^m \cap RH_1^n \neq RH_1.\]
\end{cor}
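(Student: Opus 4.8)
The plan is to produce, for each fixed $r>1$, a single weight $w=w_r$ that witnesses the strict inclusion, i.e.\ $w \in RH_r^m \cap RH_r^n$ but $w \notin RH_r$. The natural candidate is the weight underlying the measure furnished by Theorem \ref{main result}, which is $m$-adic doubling, $n$-adic doubling, yet not doubling; such a measure exists for coprime $m,n$ precisely because Propositions \ref{prop1}--\ref{prop3} supply, for coprime bases, the order-stability and arithmetic-progression inputs that \cite{AH} had available only for pairs of primes. First I would fix such a weight $w$ (tuned to the exponent $r$ via the \cite{AH} construction) and record that it is $m$-adic and $n$-adic doubling.

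The non-membership $w \notin RH_r$ is the easy half. It is classical that a weight obeying the reverse H\"older inequality \eqref{20200829eq01} on \emph{all} intervals must be doubling: a jump in $w$ of height $\Lambda$ across adjacent intervals forces the ratio of the $L^r$ average to the $L^1$ average to grow like $\Lambda^{1-1/r}$ on a suitable straddling interval (take one lying mostly inside the low-mass interval with only a sliver reaching into the high-mass one), so a finite reverse H\"older constant caps all such jumps and thereby yields doubling. Since $w$ is not doubling, $w \notin RH_r$. This argument is uniform in the exponent, so in fact $w$ fails \eqref{20200829eq01} for \emph{every} $r>1$, a point I will reuse below.

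The substantive half is establishing $w \in RH_r^m$ and $w \in RH_r^n$, i.e.\ that \eqref{20200829eq02} holds with a constant $C$ uniform over all $m$-adic (resp.\ $n$-adic) intervals $Q$; the $m$-adic and $n$-adic doubling demanded by the definitions of $RH_r^m$ and $RH_r^n$ is already in hand. Here I would invoke the prime-case arguments of \cite{AH} essentially verbatim, replacing their number-theoretic inputs by Propositions \ref{prop1}--\ref{prop3}: the construction makes $w$ a controlled, scale-coordinated perturbation on each adic grid, so the reverse H\"older estimate on each grid reduces to the same averaged computations carried out in \cite{AH}. The geometric proximity of the distinguished points of \cite{AH} (the very conclusion that Proposition \ref{prop3} now delivers for coprime bases) is what keeps the oscillation of $w$ on each grid controlled in $L^r$, producing the uniform constant. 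This transfer is where the main difficulty lies: one must verify that none of \cite{AH}'s estimates secretly exploited primality rather than the order-stability and progression structure we have reproven, so that the prime-case proof genuinely carries over.

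Finally, the ``in particular'' statement follows formally. The weight above lies in $RH_r^m \cap RH_r^n$ for the chosen $r$, hence $w \in RH_r^m \subseteq RH_1^m$ and likewise $w \in RH_1^n$, so $w \in RH_1^m \cap RH_1^n$. On the other hand, by the uniform-in-$r$ observation of the second paragraph, $w \notin RH_s$ for every $s>1$, and therefore $w \notin RH_1 = \bigcup_{s>1} RH_s$. Thus $w \in (RH_1^m \cap RH_1^n)\setminus RH_1$, which gives $RH_1^m \cap RH_1^n \neq RH_1$.
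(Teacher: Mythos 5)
Your proposal matches the paper's (implicit) proof: the corollary is stated without argument as an immediate consequence of the Section 2 number theory together with the prime-case proofs in \cite{AH}, which is exactly the transfer you describe --- the weight $w_\mu$ of the construction lies in $RH_r^m \cap RH_r^n$ by the \cite{AH} estimates but fails to be doubling, hence lies in no $RH_r$ with $r>1$, which also yields the $RH_1$ statement. The one imprecision is your single-sliver justification that a global reverse H\"older inequality forces doubling (the mass of the high interval need not sit in the sliver, so one should instead iterate the subset estimate $w(E)\leq C\left(|E|/|I|\right)^{1/r'}w(I)$ across the adjacent interval); since this implication is the classical fact the paper relies on, it does not affect the argument.
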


\begin{defn}
Let $1<r<\infty$,  we say a weight $w\in A_r$ if 
\[
\sup_I \left(\fint_I w(x)dx\right)\left(\fint_I w(x)^{\frac{-1}{r-1}}dx\right)^{r-1} < \infty,
\]
where the supremum is taken over all intervals $I$. Moreover, we say $w \in A_\infty$ if $w \in A_r$ for some $r>1$, that is,
$$
A_\infty:=\bigcup_{r>1} A_r. 
$$
\end{defn}
 We define the \emph{$m$-adic} $A_r^m$ and \emph{$n$-adic} $A^n_\infty$ similarly by only allowing averages along $m$-adic or $n$-adic intervals.  Note that the $A_r$ condition implies doubling.
\begin{cor} \label{Ap cor}
For any $r>1$ and $(m,n)=1$, we have 
$$
A_r^m \cap A_r^n \neq A_r.
$$
In particular,
$$
A_\infty^m \cap A_\infty^n \neq A_\infty.
$$
 \end{cor}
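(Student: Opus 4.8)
The plan is to exhibit one weight that lies in $A_r^m\cap A_r^n$ yet fails the global $A_r$ condition, and then to read off the $A_\infty$ statement from it. The weight I would use is the density $w$ of one of the measures from Theorem \ref{main result}: following the framework of \cite{AH} and replacing the prime-specific inputs (Propositions 2.1--2.3 of \cite{AH}) by the coprime analogues proved above (Propositions \ref{prop1}, \ref{prop2}, and \ref{prop3}), the construction of \cite{AH} produces a weight $w$, with $d\mu=w\,dx$, that is $m$-adic and $n$-adic doubling but not doubling. Before anything else I would check that the parts of the analysis in \cite{AH} certifying the $m$-adic and $n$-adic membership use the number theory only through those three propositions, so that the verifications carry over with $p,q$ replaced by $m,n$.

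Next I would establish $w\in A_r^m$ and $w\in A_r^n$. This is the delicate direction, and it is exactly the computation carried out in \cite{AH}: on $m$-adic intervals the averages of $w$ and of $w^{-1/(r-1)}$ remain comparable, so the $m$-adic $A_r$ characteristic $\sup_I\bigl(\fint_I w\bigr)\bigl(\fint_I w^{-1/(r-1)}\bigr)^{r-1}$ is finite, and the same holds $n$-adically. The geometric input making this work---that $m$-adic and $n$-adic intervals meet only in controlled ways---is precisely the proximity of the distinguished points furnished by the arithmetic progressions of Proposition \ref{prop3}, which is where the coprime number theory of Section 2 enters. Granting the transfer noted above, this yields $w\in A_r^m\cap A_r^n$.

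It remains to see $w\notin A_r$, and here I invoke the fact recorded before the statement that the $A_r$ condition implies doubling: were $w$ a global $A_r$ weight, then $\mu=w\,dx$ would be doubling, contradicting Theorem \ref{main result}. Hence $A_r^m\cap A_r^n\neq A_r$. For the $A_\infty$ assertion, $A_r^m\subseteq A_\infty^m$ and $A_r^n\subseteq A_\infty^n$ give $w\in A_\infty^m\cap A_\infty^n$, while $A_\infty=\bigcup_{s>1}A_s$ together with the same doubling implication forces any global $A_\infty$ weight to be doubling; since $\mu$ is not, $w\notin A_\infty$, and therefore $A_\infty^m\cap A_\infty^n\neq A_\infty$. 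I expect the one genuine obstacle to be the first step---confirming that the $m$-adic and $n$-adic verifications of \cite{AH} are insensitive to the arithmetic of $m$ and $n$ beyond Propositions \ref{prop1}--\ref{prop3}; once that is granted, every remaining step is either the cited computation or the elementary doubling implication, and the only essentially new ingredient is Section 2.
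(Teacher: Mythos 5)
Your proposal is correct and follows the same route the paper intends: the corollary is stated there as an immediate consequence of the coprime number theory of Section 2 combined with the $A_r^p\cap A_r^q$ computations of \cite{AH}, with non-membership in $A_r$ (and $A_\infty$) following from the remark that the $A_r$ condition implies doubling while the constructed measure is not doubling. Your write-up simply makes explicit the transfer step and the elementary doubling implication that the paper leaves implicit, so there is nothing to add.
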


We also have some additional complements to the listed results above for the ``extremal" cases of $RH_\infty$ and $A_1$ weights. 
Recall that we say $w\in RH_\infty$ if and only if

$$\sup_I\left(\esssup_{x \in I}w_\mu\right)\left(\fint_I w_\mu\right)^{-1}  < \infty$$
and $w\in A_1$ if and only if

$$\sup_I\left(\fint_I w_\mu\right) \left( \frac{1}{\essinf_{x 
\in I}w_\mu(x)}\right) < \infty.$$
We define $RH_\infty^n$ and $A_1^n$ analogously.
Here let $w$ be the weight from the measure $\mu$ constructed in Section 4 of  \cite{AH}, which is completely defined on $q$-adic intervals by $\int_Iw(x)dx = \mu(I)$, and recall the definition of the \emph{modules} $I^\alpha$.
\begin{thm}
The weight $w_\mu \notin RH_\infty^q$.
\begin{proof}
Let $Q$ be some $q$-adic interval that intersects one or more of the modules $I_\ell^{\alpha_\ell}$. Set $\alpha$ to be the largest $\alpha_\ell$ of all the intersected modules. Then, $\fint_Q w = 1$ and $\esssup_{x \in Q} w_\mu(x) = \max_{x \in Q} w_\mu(x) = b^{\alpha + 1}$. Thus
$$\sup_Q\left(\esssup_{x \in Q}w_\mu\right)\left(\fint_Q w_\mu\right)^{-1} \geq b^{\alpha + 1}.$$
Since $\alpha$ can be made arbitrarily large, this means that 
$$\sup_Q\left(\esssup_{x \in Q}w_\mu\right)\left(\fint_Q w_\mu\right)^{-1}  \nless \infty,$$
so $w_\mu \notin RH_\infty^q$.
\end{proof}
\end{thm}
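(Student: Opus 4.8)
The plan is to prove $w_\mu \notin RH_\infty^q$ by showing the defining supremum is infinite, which I would do by exhibiting a sequence of $q$-adic intervals on which the ratio of the essential supremum of $w_\mu$ to its average grows without bound. The natural place to look is the modules $I_\ell^{\alpha_\ell}$ from the construction in \cite{AH}, since these are where $w_\mu$ concentrates its mass and attains its largest values.

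First I would extract from the construction of $\mu$ in Section~4 of \cite{AH} the two quantitative facts that drive the argument: on a module with parameter $\alpha$ the weight attains a peak value of $b^{\alpha+1}$, and the mass is normalized so that over the containing $q$-adic interval $Q$ one has $\mu(Q)=|Q|$, i.e. $\fint_Q w_\mu = 1$. Fixing such a $Q$, the essential supremum is the maximum over $Q$ and equals the peak $b^{\alpha+1}$, giving
\[
\left(\esssup_{x\in Q} w_\mu\right)\left(\fint_Q w_\mu\right)^{-1} \;=\; b^{\alpha+1}.
\]

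Next, because the parameters $\alpha_\ell$ in the construction are unbounded, I can select for each target height a $q$-adic interval meeting a module whose parameter $\alpha$ is arbitrarily large; since $b>1$, letting $\alpha\to\infty$ forces the right-hand side, and hence the $RH_\infty^q$ supremum, to $+\infty$. This yields $w_\mu \notin RH_\infty^q$.

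The main obstacle, and really the only substantive point, is importing the precise features of the \cite{AH} construction: I must confirm that a single $q$-adic interval $Q$ can simultaneously witness the peak value $b^{\alpha+1}$ (so that the essential supremum is exactly $b^{\alpha+1}$ rather than something smaller) and carry the normalization $\fint_Q w_\mu = 1$, and that such intervals exist for arbitrarily large $\alpha$. Once these structural facts are in place, the estimate itself is immediate and the conclusion follows.
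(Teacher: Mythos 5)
Your proposal is correct and follows essentially the same route as the paper: both arguments pick $q$-adic intervals $Q$ meeting modules with arbitrarily large parameter $\alpha$, use the two structural facts $\fint_Q w_\mu = 1$ and $\esssup_{x\in Q} w_\mu = b^{\alpha+1}$ from the construction in \cite{AH}, and let $\alpha\to\infty$ to force the $RH_\infty^q$ supremum to be infinite. The only substantive point you flag --- verifying that a single $Q$ carries both the peak value and the normalization --- is exactly what the paper also imports from \cite{AH} without further proof.
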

Note that since the above does not depend on the fact that $q$ is prime, we have immediately that
\begin{cor}
The weight $w_\mu \notin RH_\infty^n$.
\end{cor}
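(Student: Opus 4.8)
The plan is to observe that the proof of the preceding theorem is purely structural: at no point does it invoke the primality of $q$. Every ingredient it uses — the definition of the $RH_\infty$ constant as a supremum of $\left(\esssup_Q w_\mu\right)\left(\fint_Q w_\mu\right)^{-1}$, the fact that $\fint_Q w_\mu = 1$ on any interval meeting the modules $I_\ell^{\alpha_\ell}$, and the fact that $\esssup_{x\in Q} w_\mu = b^{\alpha+1}$ where $\alpha$ is the largest relevant $\alpha_\ell$ — is a feature of the geometry of the modules and of the $q$-adic filtration, not of $q$ being prime. Accordingly, I would simply repeat that argument verbatim, replacing every occurrence of ``$q$-adic'' by ``$n$-adic''.

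Concretely, I would first recall that in the coprime construction of this paper the weight $w_\mu$ is defined on $n$-adic intervals exactly as it was on $q$-adic intervals in \cite{AH}, via $\int_I w_\mu = \mu(I)$, and that the modules $I_\ell^{\alpha_\ell}$ retain their defining property. Then, for an arbitrary $n$-adic interval $Q$ meeting one or more modules, I would verify the two numerical facts that drive the proof: that the average $\fint_Q w_\mu$ equals $1$, and that the essential supremum $\esssup_{x\in Q} w_\mu$ attains the value $b^{\alpha+1}$ for $\alpha$ the largest exponent among the intersected modules. These are exactly the two equalities established in the theorem, and their justification uses only the $n$-adic combinatorics of the construction.

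With these in hand, the same chain of inequalities gives
$$\sup_Q\left(\esssup_{x\in Q} w_\mu\right)\left(\fint_Q w_\mu\right)^{-1} \geq b^{\alpha+1},$$
and since $\alpha$ may be taken arbitrarily large by choosing $Q$ to meet a module of sufficiently high index, the supremum is infinite, whence $w_\mu \notin RH_\infty^n$.

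I do not expect a genuine obstacle: the corollary is immediate once one checks that the theorem's argument never used primality. The only point requiring care is the bookkeeping step of confirming that the module structure and the normalization $\fint_Q w_\mu = 1$ survive intact in the coprime construction — that is, that replacing $q$ by a general base $n$ in the construction of $\mu$ does not disturb the two equalities on which the proof rests. This is guaranteed by the extended framework built from Propositions \ref{prop1}--\ref{prop3}.
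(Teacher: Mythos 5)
Your proposal matches the paper's own treatment: the corollary is stated as immediate precisely because the preceding theorem's argument never uses the primality of $q$, and you reproduce that observation and the same chain of inequalities with $n$ in place of $q$. This is correct and essentially identical to the paper's reasoning.
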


\begin{thm}
The weight $w_\mu \notin A_1^q$.
\begin{proof}
Let $Q$ be some $q$-adic interval that intersects one or more of the modules $I_\ell^{\alpha_\ell}$. Set $\alpha$ to be the largest $\alpha_\ell$ of all the intersected modules. Then, $\fint_Q w = 1$ and $\essinf_{x \in Q}w_\mu = \min_{x \in Q}w_\mu(x) = a^{\alpha +1}$. Thus

$$\sup_Q\left(\fint_Q w_\mu\right) \left( \frac{1}{\essinf_{x 
\in Q}w_\mu(x)}\right) \geq \frac{1}{a^{\alpha + 1}}.$$
\noindent Since $\alpha$ can be made arbitrarily large, this means that

$$\sup_Q\left(\fint_Q w_\mu\right) \left( \frac{1}{\essinf_{x 
\in Q}w_\mu(x)}\right) \nless \infty,$$

\noindent so $w_\mu \notin A_1^q$.
\end{proof}
\end{thm}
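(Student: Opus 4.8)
The plan is to mirror the preceding argument for $RH_\infty^q$, now exploiting the \emph{lower} extreme of $w_\mu$ rather than the upper one. The $A_1^q$ functional pairs the average of $w_\mu$ against the reciprocal of its essential infimum, so it blows up precisely when $w_\mu$ is forced to take arbitrarily small values on $q$-adic intervals whose averages stay controlled. This is exactly what the modules $I_\ell^{\alpha_\ell}$ from the construction in \cite{AH} provide.

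First I would fix a $q$-adic interval $Q$ meeting one or more of the modules $I_\ell^{\alpha_\ell}$ and set $\alpha = \max_\ell \alpha_\ell$, the maximum taken over the intersected modules. By the same normalization used in the $RH_\infty^q$ proof, the average satisfies $\fint_Q w_\mu = 1$. Next I would read the essential infimum off the explicit values the weight takes on the modules: since $w_\mu$ is piecewise constant there and attains its minimal value $a^{\alpha+1}$ on the relevant piece,
\[
\essinf_{x \in Q} w_\mu = \min_{x \in Q} w_\mu(x) = a^{\alpha + 1}.
\]
Combining these two facts yields
\[
\sup_Q \left(\fint_Q w_\mu\right)\left(\frac{1}{\essinf_{x \in Q} w_\mu(x)}\right) \geq \frac{1}{a^{\alpha + 1}},
\]
and since the modules carry arbitrarily large exponents $\alpha_\ell$ while $a < 1$, letting $\alpha$ grow forces $a^{-(\alpha+1)} \to \infty$. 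Hence the supremum is infinite and $w_\mu \notin A_1^q$.

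The main obstacle is not the closing inequality, which is immediate once the two inputs $\fint_Q w_\mu = 1$ and $\essinf_{x \in Q} w_\mu = a^{\alpha+1}$ are secured, but rather justifying those inputs from the detailed construction in \cite{AH}. Concretely, I would need to verify that the construction produces modules with unbounded $\alpha_\ell$, that each such module contains a $q$-adic subinterval on which the weight genuinely attains the small value $a^{\alpha+1}$ (with $0 < a < 1$), and that the normalization really gives average exactly $1$ over the chosen $Q$. As in the $RH_\infty^q$ case, none of this uses primality of $q$, so the analogous $n$-adic statement should follow verbatim.
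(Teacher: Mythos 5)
Your proposal is correct and follows essentially the same route as the paper: fix a $q$-adic interval $Q$ meeting the modules, take $\alpha$ maximal among the intersected modules, use $\fint_Q w_\mu = 1$ together with $\essinf_{x\in Q} w_\mu = a^{\alpha+1}$, and let $\alpha \to \infty$ with $a<1$ to blow up the $A_1^q$ functional. The additional caveats you raise about verifying the inputs from the construction in \cite{AH} are reasonable but are taken as given in the paper as well.
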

Similarly, since nothing depends on the fact that $q$ is prime we have
\begin{cor}
The weight $w_\mu \notin A_1^n$.
\end{cor}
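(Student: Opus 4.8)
The plan is to transfer the proof of the preceding theorem ($w_\mu \notin A_1^q$) essentially verbatim, with the prime $q$ replaced by an arbitrary base $n$, since that argument exploited only the local behavior of $w_\mu$ on the modules $I_\ell^{\alpha_\ell}$ and never used that $q$ was prime. Concretely, I would fix an $n$-adic interval $Q$ meeting one or more of the modules and set $\alpha := \max_\ell \alpha_\ell$, the largest index among the intersected modules. The two facts the argument requires are the averaging identity $\fint_Q w_\mu = 1$ and the pointwise identity $\essinf_{x\in Q} w_\mu(x) = \min_{x\in Q} w_\mu(x) = a^{\alpha+1}$.

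Granting these, the conclusion is immediate. One has
\[
\sup_Q\left(\fint_Q w_\mu\right)\left(\frac{1}{\essinf_{x\in Q}w_\mu(x)}\right) \;\geq\; \frac{1}{a^{\alpha+1}},
\]
and since $a<1$ while $\alpha$ may be taken arbitrarily large (the module indices $\alpha_\ell$ are unbounded), the right-hand side is unbounded. Hence the $A_1^n$ supremum is infinite and $w_\mu \notin A_1^n$.

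The only point needing care --- and hence the main (though minor) obstacle --- is to confirm that the two identities above survive in the coprime construction rather than holding merely for prime $q$. For this I would appeal to the generalized number theory of Section 2: Propositions \ref{prop1}--\ref{prop3} produce the same arithmetic progressions, and thus the same geometric proximity of the distinguished points, that control how $n$-adic intervals meet the modules in the construction of \cite{AH}. Because $w_\mu$ is built to be $n$-adic doubling with unit average away from the modules and minimal value $a^{\alpha_\ell+1}$ on each module $I_\ell^{\alpha_\ell}$, both $\fint_Q w_\mu = 1$ and $\essinf_Q w_\mu = a^{\alpha+1}$ persist, and the corollary follows with no further computation, precisely as the phrase ``nothing depends on the fact that $q$ is prime'' indicates.
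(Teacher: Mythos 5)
Your proposal is correct and matches the paper's approach: the paper derives this corollary exactly by observing that the proof of the preceding theorem ($w_\mu \notin A_1^q$) nowhere uses that $q$ is prime, so the same computation with an $n$-adic interval $Q$, the identities $\fint_Q w_\mu = 1$ and $\essinf_{x\in Q} w_\mu = a^{\alpha+1}$, and the unboundedness of $\alpha$ gives the result. Your additional remarks about the Section~2 number theory guaranteeing the construction carries over to the coprime setting are consistent with the paper's framework and do not change the argument.
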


We will now prove a new application about the BMO function class.  As mentioned in the introduction, we give a number-theoretic proof that the intersection of two of these ``prime" BMO classes is never BMO, and extend this immediately, via our number theory, to the coprime case.  We heavily refer to our notation and definitions in \cite{AH}.  There are other possible applications and extensions, such as to the VMO class of functions or to the Hardy space, but we choose not to pursue these here. 
\begin{thm}
For any primes $p$ and $q$, $BMO_p \cap BMO_q \neq BMO$.
\begin{proof}
To prove this, we show that there exists a function in $BMO_p$ and $BMO_q$ but not $BMO$. Consider $f(x) = \log w(x)$, where $w=w_\mu$. Recall from \cite{AH} the definition of the modules $I^\alpha$, the values $a$ and $b$, and the intervals $H^\alpha$ and $G^\alpha$.  We know that $w \in A_\infty^p \cap A_\infty^q$, so by Corollary 2.19 from Chapter IV of \cite{GCRF}, which remains valid when restricted to the $p$-adic and $q$-adic cases, $f \in BMO_p \cap BMO_q$. Use the notation $\log\left(\frac{x}{q}\right)^\alpha$ for $\log\left(\left(\frac{x}{q}\right)^\alpha\right)$.  Now, note that
\begin{align*}
    \frac{1}{|H^{(\alpha)} \sqcup G^{(\alpha)}|}\int_{H^{(\alpha)} \sqcup G^{(\alpha)}} f &= \frac{1}{|H^{(\alpha)} \sqcup G^{(\alpha)}|}\left(\int_{H^{(\alpha)}} \log \left(\frac{a}{q}\right)^\alpha dx + \int_{G^{(\alpha)}} \log \left(\frac{b}{q}\right)^\alpha dx\right) \\
    &= \frac{1}{|H^{(\alpha)} \sqcup G^{(\alpha)}|}\left(|H^{(\alpha)}| \log \left(\frac{a}{q}\right)^\alpha + |G^{(\alpha)}| \log \left(\frac{b}{q}\right)^\alpha\right) \\
    &= \frac{1}{2}\log \left(\frac{a}{q}\right)^\alpha + \frac{1}{2}\log \left(\frac{b}{q}\right)^\alpha \\
    &= \frac{1}{2}\log \left(\frac{ab}{q^2}\right)^\alpha.
\end{align*}
Therefore,
\begin{align*}
    ||f||_{BMO} &= \sup_I \frac{1}{|I|}\int_I \left|f - \frac{1}{|I|}\int_I f\right| \\
    &\geq \frac{1}{|H^{(\alpha)} \sqcup G^{(\alpha)}|}\int_{H^{(\alpha)} \sqcup G^{(\alpha)}} \left|f - \frac{1}{|H^{(\alpha)} \sqcup G^{(\alpha)}|}\int_{H^{(\alpha)} \sqcup G^{(\alpha)}} f\right| \\
    &\geq \frac{1}{|H^{(\alpha)} \sqcup G^{(\alpha)}|}\int_{G^{(\alpha)}} \left|f - \frac{1}{|H^{(\alpha)} \sqcup G^{(\alpha)}|}\int_{H^{(\alpha)} \sqcup G^{(\alpha)}} f\right| \\
    &= \frac{1}{|H^{(\alpha)} \sqcup G^{(\alpha)}|}\int_{G^{(\alpha)}} \left|\log\left(\frac{b}{q}\right)^\alpha - \frac{1}{2}\log \left(\frac{ab}{q^2}\right)^\alpha\right| \ dx \\
    &= \frac{|H^{(\alpha)}|}{|H^{(\alpha)} \sqcup G^{(\alpha)}|} \left|\log\left(\frac{b}{q}\right)^\alpha - \frac{1}{2}\log \left(\frac{ab}{q^2}\right)^\alpha\right| \\
    &= \frac{1}{2}\left|\log\frac{(b/q)^\alpha}{(ab/q^2)^{\alpha / 2}}\right| \\
    &= \frac{1}{2}\left|\log\left(\frac{b}{a}\right)^{\alpha/2}\right| \\
    &= \frac{\alpha}{4}\log\frac{b}{a}.
\end{align*}
So since $\alpha$ can be arbitrarily large and $b>1>a>0$, $||f||_{BMO} = \infty$, which implies that $f \not\in BMO$, as was to be shown.
\end{proof}
\end{thm}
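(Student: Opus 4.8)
The plan is to produce a single function that witnesses the strict inclusion, namely $f = \log w_\mu$, where $w_\mu$ is the weight constructed in \cite{AH} that serves as the witness for the non-equality in Corollary \ref{Ap cor}. The argument naturally splits into two halves: first, showing $f \in BMO_p \cap BMO_q$, which I expect to follow softly from the membership $w_\mu \in A_\infty^p \cap A_\infty^q$; and second, showing $f \notin BMO$, which should follow by exploiting the non-doubling structure of $w_\mu$ on a carefully chosen family of ordinary intervals.

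For the first half, I would invoke the classical equivalence relating $A_\infty$ weights to exponentials of $BMO$ functions: if $w \in A_\infty$ then $\log w \in BMO$, with the $BMO$ norm controlled by the $A_\infty$ constant. The key point I would need to verify is that this equivalence (for instance, the version in \cite{GCRF}) carries over verbatim to the $p$-adic and $q$-adic filtrations, since its proof only averages over intervals drawn from the chosen family and never mixes bases. Granting this, the inclusion $w_\mu \in A_\infty^p \cap A_\infty^q$ immediately yields $f \in BMO_p \cap BMO_q$.

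For the second half, which I expect to be the crux, I would return to the explicit construction in \cite{AH} and locate the places where $w_\mu$ fails to be doubling. On the modules $I^\alpha$, the weight takes two widely separated values on the sub-pieces $H^\alpha$ and $G^\alpha$, comparable to $a^\alpha$ and $b^\alpha$ with $0 < a < 1 < b$ (the common normalizing factor in $q$ cancels in any oscillation computation). I would take the ordinary interval $I = H^\alpha \sqcup G^\alpha$, compute the average of $f$ over $I$, and then bound the mean oscillation from below by integrating $\left|f - \fint_I f\right|$ over $G^\alpha$ alone. Since $|H^\alpha|$ and $|G^\alpha|$ are comparable and $f$ is essentially constant on each piece, the average sits roughly halfway between the two levels, so the deviation on $G^\alpha$ is of size $\sim \alpha \log(b/a)$, giving a mean oscillation $\gtrsim \alpha \log(b/a)$.

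The main obstacle is making this last lower bound rigorous: it requires importing the precise geometry of $H^\alpha$ and $G^\alpha$ from \cite{AH}, in particular that they have comparable Lebesgue measure and together constitute a single ordinary interval (neither $p$-adic nor $q$-adic), so that the supremum defining $\|f\|_{BMO}$ genuinely sees them. Once that is in place, the conclusion is immediate: because $\alpha$ ranges over an unbounded set while $\log(b/a) > 0$ is a fixed positive constant, the mean oscillations are unbounded, whence $\|f\|_{BMO} = \infty$ and $f \notin BMO$, completing the separation.
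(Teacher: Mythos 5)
Your proposal matches the paper's argument essentially step for step: the same witness $f = \log w_\mu$, the same appeal to the $A_\infty^p \cap A_\infty^q \Rightarrow BMO_p \cap BMO_q$ implication restricted to each filtration, and the same lower bound obtained by averaging over $H^{(\alpha)} \sqcup G^{(\alpha)}$ and integrating the deviation over $G^{(\alpha)}$ alone to get a mean oscillation of order $\alpha \log(b/a)$. The paper carries out exactly the computation you sketch (the factors of $q$ cancel as you predict, yielding $\tfrac{\alpha}{4}\log\tfrac{b}{a}$), so your plan is correct and not a different route.
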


We now restate Theorem \ref{BMO thm} as a corollary of this result.
\begin{cor}
For any coprime integers $m, n \geq 2$, $BMO_m \cap BMO_n \neq BMO$.
\begin{proof}
This results from the number-theoretic results proved previously, applied to the construction in \cite{AH}, and run through the proof of the previous theorem.  Indeed, define the function $f$ as in the previous theorem.  We still have that $f \in BMO_m \cap BMO_n$; we will show that $f \notin BMO$ by the same approach.  One can modify the construction in \cite{AH}, given the number theoretic results proved earlier, which simply substitutes $q$ for $n$.  Hence the proof that $f\notin BMO$ as carried out above translates exactly to this case by replacing the instances of $q$ by $n$, which gives the result.
\end{proof}
\end{cor}

\end{document}